\UseRawInputEncoding 
\documentclass[12pt]{amsart}       
\usepackage{txfonts}
\usepackage{algorithm}
\usepackage{algorithmic}
\usepackage{amssymb}
\usepackage{eucal}
\usepackage{graphicx}
\usepackage{amsmath}
\usepackage{enumitem}
\usepackage{amscd}
\usepackage[all]{xy}           
\usepackage{tikz}
\usepackage{tikz-qtree}
\usepackage{amsfonts,latexsym}
\usepackage{xspace}
\usepackage{epsfig}
\usepackage{float}
\usepackage{color}
\usepackage{fancybox}
\usepackage{colordvi}
\usepackage{multicol}
\usepackage{colordvi}
\usepackage{wasysym}
\usepackage{makecell} 
\usepackage[numbers,sort&compress]{natbib}
\usepackage[active]{srcltx} 
\usepackage{CJK}
\ifpdf
  \usepackage[colorlinks,final,backref=page,hyperindex]{hyperref}
\else
  \usepackage[colorlinks,final,backref=page,hyperindex,hypertex]{hyperref}
\fi

\newcommand{\nc}{\newcommand}
\newcommand{\delete}[1]{}
\nc{\bfk}{{\bf k}}

\nc{\mlabel}[1]{\label{#1}}  
\nc{\mcite}[1]{\cite{#1}}  
\nc{\mref}[1]{\ref{#1}}  
\nc{\mbibitem}[1]{\bibitem{#1}} 

\delete{
\nc{\mlabel}[1]{\label{#1}  
{\hfill \hspace{1cm}{\small\tt{{\ }\hfill(#1)}}}}
\nc{\mcite}[1]{\cite{#1}{\small{\tt{{\ }(#1)}}}}  
\nc{\mref}[1]{\ref{#1}{{\tt{{\ }(#1)}}}}  
\nc{\mbibitem}[1]{\bibitem[\bf #1]{#1}} 
}

\newtheorem{theorem}{Theorem}[section]

\newtheorem{lemma}[theorem]{Lemma}
\newtheorem{problem}[theorem]{problem}

\theoremstyle{definition}
\newtheorem{definition}[theorem]{Definition}

\newtheorem{conjecture}{Conjecture}

\newtheorem{tempex}[theorem]{Example}
\newtheorem{tempexs}[theorem]{Examples}
\newtheorem{temprmk}[theorem]{Remark}
\newtheorem{tempexer}{Exercise}[section]

\nc{\tred}[1]{\textcolor{red}{#1}} \nc{\tgreen}[1]{\textcolor{green}{#1}}
\nc{\tblue}[1]{\textcolor{blue}{#1}} \nc{\tpurple}[1]{\textcolor{purple}{#1}}

\nc{\hu}[1]{\tpurple{\underline{Hu:}#1 }}
\nc{\xing}[1]{\tblue{\underline{Xing:}#1 }}

\nc{\GS}{Gr\"obner-Shirshov\xspace}
\nc{\gsb}{Gr\"{o}bner-Shirshov basis\xspace}
\nc{\gsbs}{Gr\"{o}bner-Shirshov bases\xspace}
\nc\olie{operated Lie algebra\xspace}
\nc\olies{operated Lie algebras\xspace}
\nc\bfone{\mathbf{1}}

\nc\nas[1]{{#1}^\ast}
\nc\alsw[1]{{\rm ALSW}(#1)}
\nc\nlsw[1]{{\rm NLSW}(#1)}
\nc\clie[1]{[#1]}
\nc\lbar[1]{\overline{#1}}
\nc\suba[1]{|_{#1}}
\nc\coplie[1]{\bfk\nlsbw{#1}}
\nc\coplieo[2]{\bfk{\rm NLSBW}_{#2}(#1)}
\nc\lb[1]{\left[#1\right]}\nc\dt[1]{{t^{(#1)}}}
\nc{\Irr}{\mathrm{Irr}}
\nc\blw[1]{\lfloor#1\rfloor}
\nc\plie[1]{\mathfrak{S}(#1)}
\nc\plien[1]{\mathcal{N}(#1)}
\nc{\lc}{\lfloor} \nc{\rc}{\rfloor}
\nc\Id{\rm Id}\nc\sopm[1]{\mathfrak{S}^\star(#1)}

\nc\ordc{>_{{\rm Dl}}} \nc\ordqc{\geq_{{\rm Dl}}}
\nc\ord{>_{{\rm IM }}}\nc\ordq{\geq _{\rm IM}}
\nc\ordd{>_{{\rm IM}}}\nc\ordqd{\geq_{{\rm IM}}}
\nc\ordb{>_{{\rm IM}}}\nc\ordqb{\geq_{{\rm IM}}}
\nc\alsbw[1]{{\rm ALSBW}_{\ordq}(#1)} \nc\nlsbw[1]{{\rm NLSBW}_{\ordq}(#1)}
\nc\alsbwo[2]{{\rm ALSBW}_{#2}(#1)} \nc\nlsbwo[2]{{\rm NLSBW}_{#2}(#1)}\nc\bws[1]{{\lfloor#1\rfloor}}\nc\oplie{{\rm OLie}(X)}
\nc{\dep}{{\rm dep}}
\nc\ordt{\geq_{\rm dt}}
\nc\ordtl{>_{\rm dt}}

\begin{document}

\title[On the nonnegativity of monomial immanants for hook partitions]{On the nonnegativity of monomial immanants for hook partitions}

\author{Xiangshuai Dong, Tingzeng Wu$^{*}$ and Xing Gao}\thanks{*Corresponding author}

\address{School of Mathematical Sciences, Xiamen University, Xiamen, 361005,  P.R.~China}
\email{a3566293588@163.com}

\address{School of Mathematics and Statistics, Qinghai Minzu University, Xining, Qinghai 810007, P.R.~China; Qinghai Institute of Applied Mathematics, Xining, Qinghai 810007, P.R.~China}
\email{mathtzwu@163.com}

\address{School of Mathematics and Statistics, Lanzhou University
Lanzhou, 730000, China;
Gansu Provincial Research Center for Basic Disciplines of Mathematics
and Statistics, Lanzhou, 730070, China
}
\email{gaoxing@lzu.edu.cn}

\date{\today}

\begin{abstract}
Let \(A=(a_{ij})\) be an \(n\times n\) real matrix and let \(\lambda\) be a partition of \(n\). Let \(\phi^\lambda\) be the class function dual to the Young permutation character, and let
\[
  \phi^\lambda[A]
  = \sum_{\sigma\in\mathfrak{S}_n}\phi^\lambda(\sigma)
    \prod_{i=1}^{n}a_{i\sigma(i)}
\]
be the corresponding monomial immanant. Stembridge [Canad. J. Math. 44 (1992), pp. 1079--1099] posed the following open problem: If all minors of \(A\) of order at most \(r\) are nonnegative, and the partition \(\lambda\) has length at most \(r\), is it true that \(\phi^\lambda[A]\ge 0\)? This paper solves the problem for all hook partitions.
\end{abstract}

\makeatletter
\@namedef{subjclassname@2020}{\textup{2020} Mathematics Subject Classification}
\makeatother
\subjclass[2020]{15A15; 05E05; 20C30; 15A45}

\keywords{Class functions; Monomial immanants; Hook partitions; Nonnegative matrices}

\maketitle

\tableofcontents

\setcounter{section}{0}

\allowdisplaybreaks
\section{Introduction}\label{sec:introduction}

\subsection{Immanants and total nonnegativity}\label{subsec:background}

Immanants connect matrix inequalities with the representation theory of
the symmetric group and the theory of symmetric functions. Their
classical study goes back to Schur and Littlewood
\cite{lit,sch}. Immanants have been extensively studied in many areas \cite{chan,don1,don2,yu}. For a real-valued class function $f$ on the symmetric
group $\mathfrak{S}_n$ and a real matrix $A=(a_{ij})_{i,j=1}^n$, the
associated \emph{generalized immanant} is
\[
 f[A]=\sum_{\sigma\in\mathfrak{S}_n}
 f(\sigma)\prod_{i=1}^n a_{i\sigma(i)}.
\]
The sign and trivial characters give the determinant and the permanent,
respectively. A central question is to identify class functions for
which this expression is nonnegative under natural assumptions on the
minors of $A$. Such questions also have symmetric-function counterparts
through immanants of Jacobi--Trudi matrices \cite{gou1,gou2,hai}.

We write $\lambda=(\lambda_1,\ldots,\lambda_\ell)\vdash n$ for a
partition of $n$ and denote its length by $\ell(\lambda)=\ell$.
Let $\mathfrak{S}_\lambda\cong
\mathfrak{S}_{\lambda_1}\times\cdots\times
\mathfrak{S}_{\lambda_\ell}$ be the standard Young subgroup, and set
$\eta^\lambda=\operatorname{Ind}_{\mathfrak{S}_\lambda}^{\mathfrak{S}_n}
\mathbf{1}$. The Young permutation characters $\eta^\lambda$ form a
basis of the space of real-valued class functions. Their dual basis
$\phi^\lambda$ is characterized by
\begin{equation*}\label{eq2}
 \langle\eta^\mu,\phi^\lambda\rangle=\delta_{\lambda\mu},
 \qquad
 \langle f,g\rangle=\frac{1}{n!}
 \sum_{\sigma\in\mathfrak{S}_n}f(\sigma)g(\sigma^{-1}).
\end{equation*}
Under the Frobenius characteristic map, $\eta^\lambda$ and
$\phi^\lambda$ correspond to the complete homogeneous symmetric
function $h_\lambda$ and the monomial symmetric function $m_\lambda$,
respectively \cite{ste1}. The functions $\phi^\lambda$ are virtual
characters, and the associated expressions $\phi^\lambda[A]$ are called
\emph{monomial immanants}. Since virtual characters may take negative
values, their nonnegativity on a matrix class requires cancellation
among the terms of the defining sum.

A real $n\times n$ matrix is \emph{totally nonnegative} if all its minors
are nonnegative. Following the notation used here and in
\cite{ste1}, we denote this class by $\operatorname{TP}$.
More generally, $\operatorname{TP}_r(n)$ consists of matrices whose
minors of order at most $r$ are nonnegative; we abbreviate this to
$\operatorname{TP}_r$ when the matrix size is understood. The indices
of every minor are taken in their natural order. Thus
$\operatorname{TP}_1(n)$ is the class of entrywise nonnegative matrices,
and $\operatorname{TP}_n(n)=\operatorname{TP}$. Throughout the paper,
the notation $\operatorname{TP}_r$ allows zero minors.

\subsection{Stembridge's question and previous progress}\label{subsec:history}
In his influential study of immanant inequalities, Stembridge
proposed the following assertion.

\begin{conjecture}\cite[Conjecture~2.1]{ste1}\label{con1.1}
For every partition $\lambda\vdash n$ and every totally nonnegative
$n\times n$ matrix $A$, one has $\phi^\lambda[A]\geq0$.
\end{conjecture}

The role of monomial immanants is explained in part by the Kostka
expansion: every irreducible character is a nonnegative integer linear
combination of the $\phi^\lambda$ \cite[Section~1]{ste1}.
Consequently, Conjecture~\ref{con1.1} strengthens the known
nonnegativity of irreducible character immanants on totally nonnegative
matrices. Stembridge then asked whether the matrix hypothesis could
be weakened according to the length of the indexing partition.

\begin{problem}\cite[Question~2.9]{ste1}\label{prob1.2}
Let $\lambda\vdash n$, let $r$ be a positive integer, and let
$A\in\operatorname{TP}_r(n)$. If $\ell(\lambda)\leq r$, must
$\phi^\lambda[A]\geq0$?
\end{problem}

Taking $r=n$ recovers Conjecture~\ref{con1.1}. For smaller $r$,
Problem~\ref{prob1.2} asks whether minors only up to the partition
length already control the sign of the immanant. This distinction is
essential: a matrix in $\operatorname{TP}_r(n)$ may have negative
minors of larger order.

Stembridge established Conjecture~\ref{con1.1} for
$\lambda=(2,1^{n-2})$ and for rectangular partitions
\cite[Theorems~2.7 and~2.8]{ste1}. He also explained that these cases
satisfy the stronger assertion in Problem~\ref{prob1.2}, observed the
case $r=1$, and verified the case $r=2$, $n\leq5$ by direct computation
\cite[discussion following Question~2.9]{ste1}.

Subsequent work developed algebraic and combinatorial approaches to
these positivity questions. Haiman \cite[Conjecture~2.1]{hai}
formulated a stronger conjecture asserting coefficientwise
nonnegativity for monomial traces evaluated on suitably normalized
Kazhdan--Lusztig basis elements of the Hecke algebra. Its specialization
at $q=1$ implies Conjecture~\ref{con1.1}.
Clearman, Shelton, and Skandera \cite[Theorems~5.6 and~5.7]{cle1}
proved Conjecture~\ref{con1.1} for all partitions with at most two
columns, equivalently $\lambda_1\leq2$, by expanding the corresponding
monomial immanants as nonnegative integer linear combinations of
Temperley--Lieb immanants and giving interpretations by path tableaux.
Their hook-shaped path-tableau formula
\cite[Theorem~6.2]{cle1} concerns irreducible character immanants;
the monomial immanants indexed by arbitrary hooks require a separate
argument.

More recently, Lesnevich \cite{les1} studied the related
Schur-positivity problem for monomial immanants of Jacobi--Trudi
matrices. He expressed hook-shape immanant characters as nonnegative
integer combinations of Stanley--Stembridge characters. Here the hook
shape indexes a Schur coefficient in the symmetric-function expansion,
so this result addresses a different aspect of immanant positivity
from the hook index of $\phi^{(a,1^k)}$ considered below.
Skandera \cite[Proposition~4.10]{ska1} proved that, for each fixed
length $\ell$, the sum
\[
 \sum_{\substack{\lambda\vdash n\\\ell(\lambda)=\ell}}
 \phi^\lambda[A]
\]
is nonnegative on totally nonnegative matrices, and obtained a
weighted path-tableau interpretation. This establishes positivity
after summing over a fixed length, while
Problem~\ref{prob1.2} concerns each individual immanant under a
hypothesis involving only minors of bounded order.

\subsection{Main result and contributions}\label{subsec:main}

The main result of this paper gives a complete affirmative answer to
Problem~\ref{prob1.2} within the family of hook partitions.

\begin{theorem}\label{thm1.1}
Let $a\geq1$ and $k\geq0$ be integers, let $n=a+k$, and let $A$ be an
$n\times n$ real matrix. If $A\in\operatorname{TP}_{k+1}(n)$, then
\[
 \phi^{(a,1^k)}[A]\geq0.
\]
\end{theorem}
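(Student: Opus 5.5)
The plan is to reduce $\phi^{(a,1^k)}[A]$ to an explicit alternating sum of products of principal minors and cycle sums, and then to regroup that sum into terms that are manifestly nonnegative under the $\operatorname{TP}_{k+1}$ hypothesis. The regrouping is the step I have not worked out.

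\textbf{Step 1: a closed formula for $\phi^{(a,1^k)}$.} On the symmetric-function side I would use the standard identity $p_{b}e_{c}=m_{(b,1^c)}+m_{(b+1,1^{c-1})}$. Telescoping it gives
\[
 m_{(a,1^k)}=\sum_{j=0}^{k}(-1)^j\,p_{a+j}\,e_{k-j}.
\]
Under the inverse Frobenius map, $e_m$ corresponds to the sign character of $\mathfrak S_m$. The function $p_m$ corresponds to the class function equal to $m$ on $m$-cycles and $0$ elsewhere. Products of symmetric functions correspond to induction products, so the immanant of a product splits over complementary index sets. This yields
\[
 \phi^{(a,1^k)}[A]=\sum_{j=0}^{k}(-1)^j\sum_{|S|=a+j}(a+j)\,C(A_S)\,\det A_{\bar S},
\]
where $C(B)$ is the sum of $\prod b_{i\sigma(i)}$ over all full cycles $\sigma$ on the index set of $B$, and $\bar S$ is the complement of $S$. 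Equivalently, $\phi^{(a,1^k)}(\sigma)=(-1)^{a-1}\operatorname{sgn}(\sigma)\sum_{c}|c|$, where $c$ runs over the cycles of $\sigma$ of length at least $a$. Every determinant in this formula has order $k-j\le k$, so the hypothesis is used only on small minors, consistent with the statement. The case $k=0$ is immediate, since then $\phi^{(a)}[A]=n\,C(A)\ge0$ for entrywise nonnegative $A$.

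\textbf{Step 2: regrouping into nonnegative pieces.} Fix the set $S$ carrying the long cycle, and fix the element of $S$ at which we cut that cycle open. Then $C(A_S)$ becomes a sum over Hamiltonian paths in $S$, weighted by products of entries. The alternating sign $(-1)^j$ attached to $|S|=a+j$ is what must be absorbed. The plan is to absorb $j$ extra vertices of the cycle into the determinant, via a Laplace or cofactor expansion along the rows and columns of those vertices. The expected outcome is an identity of the form
\[
 \phi^{(a,1^k)}[A]=\sum_{T}\sum_{P}w(P)\,\det A_{T,T'},
\]
where $T,T'$ have size at most $k+1$, the inner sum is over directed paths $P$ of length about $a-1$ on the remaining indices, and $w(P)\ge0$ is a product of entries of $A$. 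Each term is then nonnegative by the $\operatorname{TP}_{k+1}$ hypothesis. For $a=1$ this should reduce to the known answer for $(1^n)$, namely the determinant. For $k=1$ it should reproduce Stembridge's $(2,1^{n-2})$-type mechanism, that is, $2\times2$ minors times path weights. I would verify both cases before attempting the general identity.

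\textbf{Main obstacle.} The hard part is producing this regrouping in general and proving it by a sign-reversing involution. I would try the following first. Write each term of Step 1 as a pair $(\sigma,c)$, where $c$ is a cycle of $\sigma$ of length at least $a$ and carries weight $|c|$. Mark a starting point on $c$; this accounts for the factor $|c|$. Then define an involution that moves the last vertex of the marked long cycle into, or out of, the remaining permutation. This changes $j$ by one and flips the sign. The fixed points of the involution should be the configurations in which the first $a-1$ steps form a path and the remaining $k+1$ vertices, including the endpoint and the return to the start, form a signed permutation, so that they assemble into a $(k+1)$-minor with shifted rows. The delicate points are threefold: checking that the rows and columns of that $(k+1)$-minor appear in their natural order, as the $\operatorname{TP}_r$ convention requires; handling the case where the path endpoint's index lies between the indices in $T$; and possibly needing to recombine several fixed-point classes into a single minor, or into a Cauchy--Binet sum of minors, rather than obtaining each one directly.
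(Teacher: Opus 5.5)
Your Step 1 is correct and is essentially Stembridge's formula (Lemma~\ref{lem2}). One small correction: for $a=1$ one has $p_1e_k=(k+1)m_{(1^{k+1})}+m_{(2,1^{k-1})}$, so your telescoped sum equals $\kappa_{a,k}m_{(a,1^k)}$; this factor is harmless.

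\textbf{The gap.} Step 2 is the entire content of the theorem, and you have not carried it out. Worse, the identity you aim for does not exist in general. Take $n=3$ and $(a,k)=(2,1)$:
\[
\phi^{(2,1)}[A]=2(a_{12}a_{21}a_{33}+a_{11}a_{23}a_{32}+a_{13}a_{22}a_{31})-3(a_{12}a_{23}a_{31}+a_{13}a_{21}a_{32}).
\]
In degree $3$, any product of a nonnegative monomial weight with natural-order minors of order at most $2$ has one of two forms. Either it is a monomial, or it is an entry times $a_{ij}a_{kl}-a_{il}a_{kj}$ with $i<k$ and $j<l$. This covers Cauchy--Binet recombinations as well.

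Evaluate at the all-ones matrix. There $\phi^{(2,1)}$ gives $6-6=0$ and every $2\times2$ minor vanishes, so all pure monomial terms in such a certificate must have zero coefficient. Now look at the monomial $a_{13}a_{22}a_{31}$. Every pair of its factors is an inversion, so in an entry times a natural $2\times2$ minor it can only occur as the anti-diagonal term, that is, with a negative sign. But its coefficient in $\phi^{(2,1)}[A]$ is $+2$. Hence no sign-reversing involution can produce a formula of the form $\sum w(P)\det A_{T,T'}$ with $w(P)\geq 0$.

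Your proposed involution has a further problem: it is not weight-preserving. Removing $t_d$ from the cycle replaces $a_{t_{d-1}t_d}a_{t_dv}$ by $a_{t_{d-1}v}$. A $\operatorname{TP}_{k+1}$ matrix also has no planar-network model in which such moves could cancel term by term.

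\textbf{The missing idea.} A nonnegativity certificate here must be rational in the entries, with division by pivots. The paper proceeds as follows.
\begin{enumerate}
\item It splits $\kappa_{a,k}\phi^{(a,1^k)}[A]=\sum_v\Psi_v^{a,k}(A)$.
\item It eliminates an endpoint $w\neq v$. Because $w$ is an endpoint, the Schur complement $C$ has minors $\det A(I\cup\{w\}\mid J\cup\{w\})/a_{ww}$ with no extra sign. This is exactly what resolves your worry about natural row and column order, and it gives $C\in\operatorname{TP}_k$.
\item It obtains the recurrence \eqref{eq17}, whose coefficients $s$ and $\eta_T$ are nonnegative by Lemma~\ref{lem10}, and concludes by induction.
\end{enumerate}
In the example above, with $v=1$ and $w=3$, the recurrence gives
\[
a_{33}\Psi_1^{2,1}(A)=\det A(\{1,3\}\mid\{2,3\})\det A(\{2,3\}\mid\{1,3\})+a_{13}a_{31}\det A[\{2,3\}].
\]
This is a product of minors divided by the pivot, which is precisely the kind of expression your planned identity excludes.
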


Since $\ell((a,1^k))=k+1$ and
$\operatorname{TP}_r(n)\subseteq\operatorname{TP}_{k+1}(n)$ whenever
$r\geq k+1$, Theorem~\ref{thm1.1} proves precisely the implication
asked for in Problem~\ref{prob1.2} for every hook. In particular, it
also settles the hook case of Conjecture~\ref{con1.1}. The theorem
imposes no bound on the first part or the leg length, and requires
nonnegativity only through the minor order specified in Stembridge's
question. For example, it gives
$\phi^{(n-1,1)}[A]\geq0$ on $\operatorname{TP}_2(n)$ for every
$n\geq2$, and $\phi^{(n-2,1^2)}[A]\geq0$ on
$\operatorname{TP}_3(n)$ for every $n\geq3$.

A further contribution is a refinement of the desired inequality.
For each marked vertex $v$, Definition~\ref{def1} introduces a signed
sum $\Psi_v^{a,k}(A)$ over permutations whose cycle containing $v$ has
length at least $a$. The proof of Theorem~\ref{thm1.1} establishes
$\Psi_v^{a,k}(A)\geq0$ for every $v$ separately.
Identity~(\ref{eq5}) then expresses a positive multiple of the hook
monomial immanant as the sum of these nonnegative quantities. Thus the
argument explains positivity at the level of individual marked cycle
sums, before summing over the marked vertex.

The mechanism behind this refinement is the nonnegative recurrence
(\ref{eq17}). Its generating-function form is
Lemma~\ref{lem9}, and the coefficientwise nonnegativity needed to use
it is supplied by Lemma~\ref{lem10}. Together these results provide a
uniform proof for all hooks, including matrices with zero entries or
vanishing minors.

\subsection{Basic ideas of the proof}\label{subsec:strategy}

The first step separates the character-theoretic input from the matrix
inequality. Stembridge's hook character formula, recalled in
Lemma~\ref{lem2}, weights a permutation by the number of vertices lying
in cycles of length at least $a$, up to an explicit sign and a positive
normalizing factor. Marking such a vertex gives the decomposition
(\ref{eq5}). The resulting marked sums still contain alternating
signs, so the decomposition alone does not prove nonnegativity.

To organize these cancellations, we introduce one variable for each
unmarked vertex. A formal resolvent records closed walks through the
marked vertex, while a determinant records principal minors on the
remaining vertices. Extracting a squarefree coefficient forces the
walk to use distinct vertices and keeps its vertices disjoint from
those used by the determinant. Lemma~\ref{lem7} identifies the
resulting coefficient with $\Psi_v^{a,k}(A)$. This encoding turns the
cycle restriction into a coefficient-extraction problem compatible
with block matrix identities.

For $k\geq1$, the next step is to eliminate an endpoint $w$ of the
ordered index set, chosen different from the marked vertex $v$. If $s=a_{ww}>0$ and $C$
is the Schur complement at $w$, Lemma~\ref{lem5} gives
\[
 A\in\operatorname{TP}_{k+1}
 \quad\Longrightarrow\quad C\in\operatorname{TP}_{k}.
\]
Choosing an endpoint ensures that the minors of $C$ are ratios of
minors of $A$ with no additional sign. The block identities in
Lemma~\ref{lem8} then yield the generating-function recurrence of
Lemma~\ref{lem9}. One term replaces $A$ by $C$ and lowers $k$ by one;
the other replaces $A$ by smaller principal submatrices and transfers
the marked vertex from $v$ to $w$.

The remaining difficulty is the sign of the transfer coefficients.
Their initial expression is a quotient of formal series, from which
nonnegativity is not apparent. Lemma~\ref{lem10} supplies a
factorization in terms of nonnegative matrix blocks and formal
resolvents with nonnegative coefficients. Consequently, every
coefficient in the extracted recurrence (\ref{eq17}) is nonnegative.
The Schur-complement term lowers both the leg length and the required
minor order by one, whereas each principal-submatrix term preserves
the minor condition and reduces the matrix size. Strong induction
therefore proves the nonnegativity of all marked cycle sums
simultaneously. The case $k=0$ is a sum of nonnegative cycle weights;
when an endpoint pivot vanishes, Lemma~\ref{lem5} gives a zero row or
column, so the relevant sum vanishes. These observations complete the
argument without a strict-positivity assumption.

\medskip
\noindent{\bf Outline of the paper.}
Section~\ref{sec2} develops the marked cycle generating functions and
proves Theorem~\ref{thm1.1}. After recalling the character identity in
Lemma~\ref{lem2} and the Schur-complement facts in
Lemmas~\ref{lem5} and~\ref{lem6}, we establish the coefficient formula
in Lemma~\ref{lem7}, derive the elimination identities in
Lemmas~\ref{lem8} and~\ref{lem9}, and prove the nonnegativity of the
transfer series in Lemma~\ref{lem10}. We then extract recurrence
(\ref{eq17}) and carry out the induction.
 
\section{Marked cycle sums and a nonnegative recurrence}\label{sec2}
We prove Theorem~\ref{thm1.1} by first separating the contribution of
each marked vertex and then deriving a recurrence whose coefficients
are nonnegative. The notation is introduced in groups as it becomes
needed in the argument.

\subsection{Basic notation and marked cycle sums}\label{subsec:marked-cycles}

Throughout this section, $V$ is a finite nonempty totally ordered set,
$A=(a_{ij})_{i,j\in V}$ is a real matrix, and
$[n]=\{1,2,\ldots,n\}$. We begin with three conventions for the
character formula and the marked cycle sums.

\begin{enumerate}
\item \textbf{Permutations and cycles.}
The symmetric group on $V$ is denoted by $\mathfrak{S}(V)$.
Its class functions, cycle types, and signs are identified with those
on $\mathfrak{S}_{|V|}$ through the order-preserving bijection
$V\to[|V|]$. For $\sigma\in\mathfrak{S}(V)$, let
$\ell_v(\sigma)$ be the length of the cycle containing $v$, with a
fixed point regarded as a $1$-cycle.

\item \textbf{Permutation weights.}
The weight of $\sigma\in\mathfrak{S}(V)$ is
$\operatorname{wt}_A(\sigma)=\prod_{i\in V}a_{i\sigma(i)}$.

\item \textbf{Hook normalization.}
For $a\geq1$ and $k\geq0$, let $\kappa_{a,k}$ be the multiplicity of
the part $a$ in $(a,1^k)$; explicitly,
\[
 \kappa_{a,k}=\begin{cases}1,&a>1,\\ k+1,&a=1.\end{cases}
\]
\end{enumerate}

The following formula of Stembridge \cite{ste1} expresses a hook
monomial character in terms of the vertices lying in sufficiently long
cycles, and provides the character-theoretic starting point of the proof.

\begin{lemma}[\cite{ste1}]\label{lem2}
Let $a\geq1$, $k\geq0$, $a+k=n$, $\sigma\in\mathfrak{S}_n$, and let $m_q(\sigma)$ denote the number of $q$-cycles of $\sigma$. Then
\begin{equation}\label{eq3}
 \kappa_{a,k}\phi^{(a,1^k)}(\sigma)
 =(-1)^{a-1}\operatorname{sgn}(\sigma)\sum_{q\geq a}q\,m_q(\sigma).
\end{equation}
\end{lemma}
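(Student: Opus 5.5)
The plan is to pass to symmetric functions through the Frobenius characteristic and read off $\phi^{(a,1^k)}(\sigma)$ as a coefficient in the expansion of a power sum in the complete homogeneous basis.

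\textbf{Step 1: $\phi^\lambda$ as an $h$-coefficient of a power sum.} Let $\rho$ be the cycle type of $\sigma$ and write $\langle\cdot,\cdot\rangle$ for the Hall inner product. Since $\operatorname{ch}\phi^\lambda=m_\lambda$, we have
\[
\phi^\lambda(\sigma)=\langle m_\lambda,p_\rho\rangle .
\]
Because $\{h_\mu\}$ and $\{m_\mu\}$ are dual bases, $p_\rho=\sum_\mu\langle p_\rho,m_\mu\rangle h_\mu$. Hence $\phi^\lambda(\sigma)$ is the coefficient of $h_\lambda$ when $p_\rho$ is written as a polynomial in the algebraically independent $h_1,h_2,\dots$. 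So we need the coefficient of $h_ah_1^k$ in $p_\rho=\prod_i p_{\rho_i}$.

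\textbf{Step 2: expand a single power sum.} Put $H(t)=\sum_{j\ge0}h_jt^j$ and $P(t)=\sum_{q\ge1}p_qt^{q-1}$. Newton's identity gives $P=H'/H$. Write
\[
1/H=\sum_\beta(-1)^{\ell(\beta)}h_\beta t^{|\beta|},
\]
the sum running over compositions $\beta$. Multiplying by $H'=\sum_j jh_jt^{j-1}$ yields
\[
p_q=\sum_{\alpha\models q}(-1)^{\ell(\alpha)-1}\,\alpha_{\ell(\alpha)}\,h_{\alpha_1}\cdots h_{\alpha_{\ell(\alpha)}},
\]
so each composition is weighted by its last part. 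From this formula we read off two coefficients.
\begin{itemize}
\item The coefficient of $h_1^q$ in $p_q$ is $(-1)^{q-1}$.
\item For $a\ge2$ and $q\ge a$, the coefficient of $h_ah_1^{q-a}$ in $p_q$ is $(-1)^{q-a}q$. The relevant compositions have $q-a+1$ parts. Among them, $q-a$ end in $1$ and contribute $1$ each, and one ends in $a$ and contributes $a$, for a total of $q$.
\end{itemize}

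\textbf{Step 3: assemble the product.} First take $a\ge2$. The monomial $h_ah_1^k$ contains $h_a$ exactly once. So in $\prod_i p_{\rho_i}$ exactly one factor $p_{\rho_i}$, which must have $\rho_i\ge a$, contributes $h_ah_1^{\rho_i-a}$, and every other factor contributes $h_1^{\rho_j}$. Each choice of $i$ therefore contributes
\[
(-1)^{\rho_i-a}\rho_i\prod_{j\ne i}(-1)^{\rho_j-1}=(-1)^{a-1}\rho_i\,(-1)^{n-\ell(\rho)}=(-1)^{a-1}\operatorname{sgn}(\sigma)\,\rho_i .
\]
Summing over the parts $\rho_i\ge a$ gives $(-1)^{a-1}\operatorname{sgn}(\sigma)\sum_{q\ge a}q\,m_q(\sigma)$, which is (\ref{eq3}) with $\kappa_{a,k}=1$.

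Now take $a=1$, so that $n=k+1$. The coefficient of $h_1^n$ in $p_\rho$ is $\prod_i(-1)^{\rho_i-1}=\operatorname{sgn}(\sigma)$. The right-hand side of (\ref{eq3}) equals $\operatorname{sgn}(\sigma)\sum_q q\,m_q(\sigma)=n\operatorname{sgn}(\sigma)=\kappa_{1,k}\operatorname{sgn}(\sigma)$, so the identity holds in this case as well.

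The only delicate point is the bookkeeping in Step 2. We must check that the ``last part'' weight really sums to $q$, and we must use that the $h_j$ commute and are algebraically independent, so that coefficients of commutative monomials are well defined. Everything else is routine.
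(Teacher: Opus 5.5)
Your proof is correct. The identification $\phi^\lambda(\sigma)=\langle m_\lambda,p_\rho\rangle=[h_\lambda]\,p_\rho$ holds. So does the expansion $p_q=\sum_{\alpha\models q}(-1)^{\ell(\alpha)-1}\alpha_{\ell(\alpha)}h_\alpha$ obtained from $P=H'/H$. The two coefficient counts and the sign computation $(-1)^{\rho_i-a}\prod_{j\ne i}(-1)^{\rho_j-1}=(-1)^{a-1}\operatorname{sgn}(\sigma)$ also check out. You are right to treat $a=1$ separately, since there $h_a=h_1$ and the ``exactly one factor carries $h_a$'' argument no longer applies.

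The paper gives no proof of this lemma; it simply imports the formula from Stembridge \cite{ste1}. So your argument does not compete with one in the paper; it is a self-contained replacement for the citation. The cost is a short symmetric-function computation, essentially the one-row case of the brick-tabloid expansion of power sums in the $h$-basis.

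It also makes explicit two things the paper leaves implicit:
\begin{itemize}
\item The weight $\rho_i$ that your computation attaches to the unique cycle supplying $h_a$ is exactly the number of ways to mark a vertex on that cycle. This is the mechanism behind the passage from \eqref{eq3} to \eqref{eq5}.
\item $\kappa_{a,k}$ is only the normalization that makes the uniform formula hold in the degenerate case $a=1$, where in fact $\phi^{(1^n)}=\operatorname{sgn}$.
\end{itemize}
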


Since each cycle of length $q$ contains exactly $q$ possible marked
vertices, Lemma~\ref{lem2} suggests isolating the contribution associated
with one fixed vertex.

\begin{definition}\label{def1}
Let $|V|=a+k$, $a\geq1$, $k\geq0$, $v\in V$. Define
\begin{equation}\label{eq4}
 \Psi_v^{a,k}(A)=(-1)^{a-1}
 \sum_{\substack{\sigma\in\mathfrak{S}(V)\\\ell_v(\sigma)\geq a}}
 \operatorname{sgn}(\sigma)\operatorname{wt}_A(\sigma),
\end{equation}
called the \emph{cycle sum with marked vertex} \(v\).
\end{definition}

\subsection{Endpoint Schur complements}\label{subsec:endpoint-schur}

We next introduce the matrix notation needed for the elimination step.
The order on the index set matters because it determines the signs of
all minors.

\begin{enumerate}
\item \textbf{Submatrices and minors.}
For $I,J\subseteq V$, let $A(I\mid J)$ denote the submatrix with rows
in $I$ and columns in $J$, both in increasing order, and write
$A[I]=A(I\mid I)$. When $|I|=|J|$, set
\[
 \Delta_{I\mid J}(A)=\operatorname{det}A(I\mid J),\qquad
 \Delta_I(A)=\operatorname{det}A[I].
\]
We use the convention
$\operatorname{det}A[\varnothing]=\Delta_\varnothing(A)=1$.
For individual indices, row and column blocks are abbreviated as
$A_{vW}=A(\{v\}\mid W)$ and $A_{Wv}=A(W\mid\{v\})$;
the same convention applies to other blocks.

\item \textbf{Minor conditions and endpoints.}
For $r\geq1$, define
\[
 \operatorname{TP}_r(V)=
 \{A:\Delta_{I\mid J}(A)\geq0
 \text{ whenever }I,J\subseteq V,\ |I|=|J|\leq r\}.
\]
If $r>|V|$, only existing minors are considered. An \emph{endpoint}
of $V$ is its minimum or maximum element. All subsets of $V$ inherit
its order.
\end{enumerate}

The induction will eliminate an endpoint of the index set; the following
lemma explains how this operation lowers the required order of total
nonnegativity and handles a zero pivot. Its determinant identity is a
standard consequence of the Schur determinant formula \cite{hor1}.

\begin{lemma}[\cite{hor1}]\label{lem5}
Let $A\in\operatorname{TP}_r(V)$, $r\geq2$, and let $w$ be an endpoint of $V$.
\begin{enumerate}[label=(\alph*)]
\item If $a_{ww}=0$, then either the \(w\)-th row or the \(w\)-th column of \(A\) is identically zero. \label{lem5:ita}

\item If $s=a_{ww}>0$, let $U=V\setminus\{w\}$, and define the {\emph Schur complement $C$ with respect to the pivot $s$} to be
\[
 C=A[U]-\frac1s A(U\mid\{w\})A(\{w\}\mid U),
 \qquad c_{ij}=a_{ij}-\frac{a_{iw}a_{wj}}s, \qquad i,j\in U.
\]
For any $I,J\subseteq U$ with $|I|=|J|$, we have
\begin{equation}\label{eq6}
 \operatorname{det} C(I\mid J)
 =\frac{\operatorname{det} A(I\cup\{w\}\mid J\cup\{w\})}{s}.
\end{equation}
In particular, $C\in\operatorname{TP}_{r-1}(U)$.    \label{lem5:itb}
\end{enumerate}
\end{lemma}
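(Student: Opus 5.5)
We prove the two parts separately; both use only the $2\times 2$ minors through $w$ and the Schur determinant formula. By symmetry (reversing the order of $V$ turns the minimum into the maximum and leaves every minor in natural order unchanged), we may assume $w=\min V$ where convenient. For $w=\max V$ the same arguments apply with the order of rows and columns reversed.

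\textbf{Part \ref{lem5:ita}.} Suppose $a_{ww}=0$. Since $r\ge2$, every $2\times2$ minor is nonnegative. Take $w=\min V$ and any $i,j\in V\setminus\{w\}$. Then $i,j>w$, so the minor with rows $\{w,i\}$ and columns $\{w,j\}$, in natural order, equals
\[
a_{ww}a_{ij}-a_{wj}a_{iw}=-a_{wj}a_{iw}\ge0 .
\]
Entries are nonnegative because $A\in\operatorname{TP}_1$, so $a_{wj}a_{iw}=0$ for all such $i,j$. If some $a_{wj}\neq0$, then $a_{iw}=0$ for every $i\neq w$, and together with $a_{ww}=0$ the $w$-th column vanishes. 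Otherwise the $w$-th row vanishes. When $w=\max V$ the rows and columns sit in the other order, but the minor is the same expression, so the argument is identical.

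\textbf{Part \ref{lem5:itb}.} Fix $I,J\subseteq U$ with $|I|=|J|$ and consider the matrix $M=A(I\cup\{w\}\mid J\cup\{w\})$. Since $w$ is an endpoint of $V$, it occupies the same extreme position (first, or last) in both the row list and the column list of $M$. We therefore move the $w$ row and the $w$ column to the first position. If $w$ is last in both lists, this requires the same number of transpositions on rows as on columns, so $\det M$ is unchanged.

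In the rearranged matrix the leading entry is $s$, and the Schur determinant formula gives
\[
\det M = s\cdot\det\bigl(A(I\mid J)-s^{-1}A(I\mid\{w\})A(\{w\}\mid J)\bigr)=s\,\det C(I\mid J),
\]
since $C(I\mid J)$ is exactly the restriction of $C$ to rows $I$ and columns $J$. This proves \eqref{eq6}.

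For the final claim, let $|I|=|J|\le r-1$. Then $|I\cup\{w\}|=|J\cup\{w\}|\le r$, so the numerator in \eqref{eq6} is a minor of $A$ of order at most $r$ taken in natural order, hence nonnegative. Dividing by $s>0$ gives $\det C(I\mid J)\ge0$, so $C\in\operatorname{TP}_{r-1}(U)$.

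The only delicate point is the sign bookkeeping. It is precisely the endpoint hypothesis that guarantees the row and column moves are equal in number and cancel. For an interior $w$, the row and column permutations could differ, introducing a sign $(-1)^{p+q}$ and breaking the implication.
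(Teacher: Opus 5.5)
Your proof is correct and follows essentially the same route as the paper. For part (a) you use the $2\times2$ minor $\det A(\{i,w\}\mid\{j,w\})=-a_{iw}a_{wj}$ together with entrywise nonnegativity. For part (b) you move the $w$ row and column to the front, where the signs $(-1)^{|I|}(-1)^{|J|}$ cancel when $w$ is the maximum, and then apply the Schur determinant formula. Your opening order-reversal symmetry remark is valid but unused, since you handle both endpoints directly.
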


\begin{proof}
\ref{lem5:ita}
Assume $a_{ww}=0$. Since $w$ is an endpoint, for any $i,j\in V\setminus\{w\}$, the nonnegativity of second-order minors gives
\[
 0\leq\operatorname{det} A(\{i,w\}\mid\{j,w\})=-a_{iw}a_{wj}.
\]
Since \(A\) is entrywise nonnegative, \(a_{iw}a_{wj}=0\). If there exists \(i\neq w\) such that \(a_{iw}>0\), then \(a_{wj}=0\) for all \(j\neq w\), and hence the \(w\)-th row is identically zero. If no such \(i\) exists, then the \(w\)-th column is identically zero.

\ref{lem5:itb} Assume $s=a_{ww}>0$. For any $I,J\subseteq U$, $|I|=|J|$, simultaneously move the row and column of the submatrix $A(I\cup\{w\}\mid J\cup\{w\})$ corresponding to $w$ to the first position. When $w$ is the minimum element, no rearrangement is needed. When $w$ is the maximum element, the rearrangements of rows and columns produce signs $(-1)^{|I|}$ and $(-1)^{|J|}$, respectively, whose product is $1$. Therefore, by the Schur determinant formula, we obtain
\[
 \begin{aligned}
 \operatorname{det} A(I\cup\{w\}\mid J\cup\{w\})
 &=\operatorname{det}\begin{pmatrix}
 s&A(\{w\}\mid J)\\
 A(I\mid\{w\})&A(I\mid J)
 \end{pmatrix}\\
 &=s\operatorname{det}\left(A(I\mid J)-\frac1s A(I\mid\{w\})A(\{w\}\mid J)\right)\\
 &=s\operatorname{det} C(I\mid J).
 \end{aligned}
\]
This yields \eqref{eq6}. When $|I|=|J|\leq r-1$, the corresponding minor of \(A\) on the right-hand side has order at most \(r\), so $\operatorname{det} C(I\mid J)\geq0$. Hence $C\in\operatorname{TP}_{r-1}(U)$.
\end{proof}

To carry out the same elimination at the level of generating functions,
we also recall the block determinant and rank-one update identities in
their formal power series form.

\begin{lemma}\cite{hor1}\label{lem6}
Let $T$ be a square matrix whose entries lie in the ring of commutative formal power series over the reals, and whose constant term matrix is invertible. Let $b$ be a scalar, $\rho$ a row vector, and $\gamma$ a column vector, with dimensions compatible with $T$.
Suppose $\xi$ and $\zeta$ are any two compatible column vectors. Then
\begin{equation}\label{eq9}
 \begin{aligned}
 \operatorname{det}\begin{pmatrix}b&\rho\\\gamma&T\end{pmatrix}
 &=\operatorname{det} T\,(b-\rho T^{-1}\gamma),\\
 \operatorname{det}(T+\xi\zeta^{\mathsf T})
 &=\operatorname{det} T\,(1+\zeta^{\mathsf T}T^{-1}\xi).
 \end{aligned}
\end{equation}
\end{lemma}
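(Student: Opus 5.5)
Both identities in \eqref{eq9} are purely algebraic consequences of block elimination, so the plan is to prove them over the commutative ring $R=\mathbb{R}[[x_1,\ldots,x_m]]$ exactly as one does over a field. The only extra ingredient is that $T^{-1}$ exists in $R$. To see this, note that $\operatorname{det}T\in R$ has constant term $\operatorname{det}T_0\neq0$, where $T_0$ is the constant term matrix. A formal power series with nonzero constant term is a unit in $R$, so $T^{-1}=(\operatorname{det}T)^{-1}\operatorname{adj}(T)$ is a matrix over $R$ with $TT^{-1}=T^{-1}T=I$. Everything after that takes place inside matrices over a commutative ring, where $\operatorname{det}$ is multiplicative.

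For the first identity I would write the factorization
\[
\begin{pmatrix}b&\rho\\\gamma&T\end{pmatrix}
=\begin{pmatrix}1&\rho T^{-1}\\0&I\end{pmatrix}
\begin{pmatrix}b-\rho T^{-1}\gamma&0\\\gamma&T\end{pmatrix}.
\]
One checks it by multiplying out the blocks. The left factor is unitriangular, so its determinant is $1$. The right factor is block lower triangular, so its determinant is $(b-\rho T^{-1}\gamma)\operatorname{det}T$. Multiplicativity of $\operatorname{det}$ over $R$ then gives the claim.

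For the second identity I would first reduce to the case $T=I$. Since $T+\xi\zeta^{\mathsf T}=T(I+T^{-1}\xi\zeta^{\mathsf T})$, it suffices to show $\operatorname{det}(I+uz^{\mathsf T})=1+z^{\mathsf T}u$ with $u=T^{-1}\xi$ and $z=\zeta$. This follows by applying the same block trick to
\[
M=\begin{pmatrix}1&-z^{\mathsf T}\\u&I\end{pmatrix},
\]
computing $\operatorname{det}M$ in two ways. Eliminating with the invertible lower-right block $I$, via the first identity, gives $\operatorname{det}M=1+z^{\mathsf T}u$. Eliminating instead with the top-left entry $1$ gives $\operatorname{det}M=\operatorname{det}(I+uz^{\mathsf T})$.

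For that second elimination I would use the symmetric factorization
\[
M=\begin{pmatrix}1&0\\u&I\end{pmatrix}\begin{pmatrix}1&-z^{\mathsf T}\\0&I+uz^{\mathsf T}\end{pmatrix},
\]
which is again checked by multiplying out the blocks. Since $z^{\mathsf T}T^{-1}\xi=\zeta^{\mathsf T}T^{-1}\xi$, combining the two computations yields the stated formula.

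There is no real obstacle. The one point needing care is justifying invertibility of $T$ over the power series ring from the invertibility of its constant term, together with the validity of multiplicativity of determinants over a commutative ring. Both are standard, and I would state them explicitly so that the later coefficient-extraction arguments, which apply these identities to formal resolvents, are rigorous.
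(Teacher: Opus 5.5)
Your proof is correct: both block factorizations multiply out as claimed, and the rank-one identity follows from computing $\det(I+uz^{\mathsf T})=1+z^{\mathsf T}u$ via two eliminations of the bordered matrix $M$, which is the standard route. The paper gives no proof of its own and only cites Horn--Zhang, where the Schur determinant formula is obtained by this same block elimination; your explicit check that $T^{-1}$ exists over the power series ring (because $\det T$ has constant term $\det T_0\neq0$ and is therefore a unit) supplies exactly the detail needed for the paper's later applications, where $b$, $\rho$, $\gamma$ have series entries such as $b=1+tsz_w$.
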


\subsection{Generating functions and coefficient extraction}\label{subsec:cycle-series}

We now encode the marked cycle sums by formal series. The following
conventions separate the vertex bookkeeping from the matrix operations
and introduce the generating functions used in Lemma~\ref{lem7}.

\begin{enumerate}
\item \textbf{Vertex variables and squarefree monomials.}
For a finite ordered set $W$, introduce independent commuting variables
$z_i$, $i\in W$, and set
\[
 Z_W=\operatorname{diag}(z_i:i\in W),\qquad
 z_S=\prod_{i\in S}z_i\quad(S\subseteq W),\qquad z_\varnothing=1.
\]
The monomial $z_S$ is squarefree: each vertex in $S$ occurs once.

\item \textbf{Coefficient extraction.}
For a formal series $F$, the notation $[z_S]F$ selects exponent $1$
for variables indexed by $S$ and exponent $0$ for the other vertex
variables, while retaining the parameters $u,t,x$.
Similarly, $[u^pt^jz_S]F$ extracts the indicated parameter and vertex
exponents, whereas $[u^pt^j]F$ and $[x^d]F$ retain all vertex variables.
To extract only one vertex variable, write
$[z_w^m]_{z_w}F=F_m$ when $F=\sum_{m\geq0}F_mz_w^m$ and each $F_m$
is independent of $z_w$. In particular, for $R=W\setminus\{w\}$,
\[
 [u^pt^jz_R]\bigl([z_w]_{z_w}F\bigr)=[u^pt^jz_W]F.
\]
All series have nonnegative exponents, so squarefree coefficients
satisfy the product rule
\begin{equation}\label{eq7}
 [z_S](FG)=\sum_{T\subseteq S}([z_T]F)([z_{S\setminus T}]G).
\end{equation}

\item \textbf{Formal inverses and a divided difference.}
All matrix inverses below are formal inverses; in particular,
\[
 (I-xMZ_W)^{-1}=\sum_{d\geq0}x^d(MZ_W)^d.
\]
When $W=\varnothing$, the corresponding determinant equals $1$ and
products of empty row and column blocks equal $0$.
For $q(x)=\sum_{d\geq0}q_dx^d$, we use the notation
\begin{equation}\label{eq8}
 \frac{uq(u)+tq(-t)}{u+t}
 :=\sum_{d\geq0}q_d\sum_{h=0}^d(-1)^hu^{d-h}t^h.
\end{equation}
The coefficients $q_d$ may themselves involve vertex variables.
Multiplication by $u+t$ recovers $uq(u)+tq(-t)$, so
(\ref{eq8}) does not require $u+t$ to be invertible.

\item \textbf{Closed-walk and marked cycle series.}
Fix $v\in V$, and put $W=V\setminus\{v\}$ and $E=A[W]$.
Define
\begin{equation}\label{eq10}
 \begin{aligned}
 q_{A,v}(x)&=a_{vv}+xA_{vW}Z_W(I-xEZ_W)^{-1}A_{Wv},\\
 K_{A,v}(u,t)&=\operatorname{det}(I+tEZ_W)
       \frac{uq_{A,v}(u)+tq_{A,v}(-t)}{u+t}.
 \end{aligned}
\end{equation}
Here $q_{A,v}$ records closed walks starting at $v$ whose internal
vertices lie in $W$, while the determinant factor in $K_{A,v}$ records
principal minors. Squarefree coefficient extraction will ensure that
the vertices used by these two factors are disjoint.

\item \textbf{Cycle weights on a prescribed vertex set.}
For $T\subseteq W$, let $\mathcal{O}(T)$ be the set of linear orderings
of $T$, and define
\[
 \Gamma_v(T)=
 \begin{cases}
 a_{vv},&T=\varnothing,\\[1mm]
 \displaystyle\sum_{(t_1,\ldots,t_d)\in\mathcal{O}(T)}
 a_{vt_1}\left(\prod_{h=1}^{d-1}a_{t_ht_{h+1}}\right)a_{t_dv},
 &d=|T|\geq1.
 \end{cases}
\]
For nonempty $T$, each cycle
$$v\to t_1\to\cdots\to t_d\to v$$ is counted once; the empty set
corresponds to the fixed point $v$. Empty products are understood to
be $1$.
\end{enumerate}

The next lemma identifies the marked cycle sums with squarefree
coefficients of $K_{A,v}$ and records the locality property needed to
apply the resulting recurrence to smaller principal submatrices.

\begin{lemma}\label{lem7}
Let $S\subseteq W$, and let $p,j$ be nonnegative integers.
\begin{enumerate}[label=(\alph*)]
\item If $|S|=p+j$, then
\begin{equation}\label{eq11}
 [u^pt^jz_S]K_{A,v}
 =\sum_{\substack{I\subseteq S\\|I|\leq j}}
   (-1)^{j-|I|}\operatorname{det} A[I]\,\Gamma_v(S\setminus I).
\end{equation}
In particular, when $|V|=a+k$,
\begin{equation}\label{eq12}
 \Psi_v^{a,k}(A)=[u^{a-1}t^kz_W]K_{A,v}.
\end{equation} \label{lem7:ita}

\item Moreover, $[u^pt^j]K_{A,v}$ is homogeneous of degree $p+j$ in the vertex variables, and the coefficients in \eqref{eq11} depend only on the principal submatrix $A[\{v\}\cup S]$. \label{lem7:itb}
\end{enumerate}
\end{lemma}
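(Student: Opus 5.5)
Expanding both factors of $K_{A,v}$ into squarefree-supported pieces and then matching coefficients is the plan. First, the divided difference (\ref{eq8}) gives
\[
 [u^pt^j]\frac{uq(u)+tq(-t)}{u+t}=(-1)^j q_{p+j},
\]
where $q_d=[x^d]q_{A,v}(x)$. The coefficient $q_0=a_{vv}$, and for $d\ge1$ we have $q_d=A_{vW}Z_W(EZ_W)^{d-1}A_{Wv}$. Expanding the matrix product, $q_d$ is a sum over walks $v\to t_1\to\cdots\to t_d\to v$ with all $t_h\in W$. Each walk carries weight $a_{vt_1}\prod_h a_{t_ht_{h+1}}a_{t_dv}$ times $z_{t_1}\cdots z_{t_d}$. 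Consequently $q_d$ is homogeneous of degree $d$ in the vertex variables. A squarefree coefficient $[z_T]q_d$ is nonzero only when $|T|=d$, in which case the $t_h$ are distinct and exhaust $T$. Hence $[z_T]q_{|T|}=\Gamma_v(T)$, including $T=\varnothing$.

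Next, the determinant factor expands in principal minors:
\[
 \det(I+tEZ_W)=\sum_{I\subseteq W}t^{|I|}\det E[I]\,z_I .
\]
This is because $\det(I+tEZ_W)=\det(I+tZ_W^{1/2}EZ_W^{1/2})$ formally, or one can expand the determinant directly, since column $i$ of $EZ_W$ is scaled by $z_i$. So its $t^i$ part is homogeneous of degree $i$, it is already squarefree, and $[t^iz_I]$ equals $\det A[I]$ when $|I|=i$. For the product, we use
\[
 [u^pt^j]K_{A,v}=\sum_{i\le j}[t^i]\det(I+tEZ_W)\cdot(-1)^{j-i}q_{p+j-i}.
\]
Each summand is homogeneous of degree $i+(p+j-i)=p+j$, which proves homogeneity in \ref{lem7:itb}. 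Applying the product rule (\ref{eq7}) with $|S|=p+j$, and using homogeneity, forces $|I|=i$ and $|S\setminus I|=p+j-i$. This yields (\ref{eq11}). Locality is visible from the formula, since only $\det A[I]$ with $I\subseteq S$ and $\Gamma_v(S\setminus I)$ appear, and these involve only entries of $A[\{v\}\cup S]$.

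For (\ref{eq12}), take $S=W$, $p=a-1$ and $j=k$, so $|S|=a+k-1$. The task is then to match
\[
 (-1)^{a-1}\sum_{\ell_v(\sigma)\ge a}\operatorname{sgn}(\sigma)\operatorname{wt}_A(\sigma).
\]
Group the permutations $\sigma$ by the set $T=S\setminus I$ of the other vertices on the cycle through $v$. The cycle through $v$ then has length $|T|+1\ge a$, which is equivalent to $|I|\le k$. Summing over the restriction of $\sigma$ to $I$ gives $\det A[I]$, including its signs. Summing over cyclic arrangements of $v$ and $T$ gives $\Gamma_v(T)$, and the cycle contributes sign $(-1)^{|T|}$. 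The total sign is therefore $(-1)^{a-1}(-1)^{|T|}=(-1)^{a-1+a+k-1-|I|}=(-1)^{k-|I|}$, which agrees with (\ref{eq11}).

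The main obstacle is the sign and degree bookkeeping. In particular, the divided-difference convention (\ref{eq8}) must produce exactly $(-1)^{j-i}q_{p+j-i}$ for the $t^{j-i}$ part. I would check this directly: the coefficient of $u^{p}t^{j-i}$ in $\sum_d q_d\sum_h(-1)^hu^{d-h}t^h$ requires $h=j-i$ and $d=p+j-i$.
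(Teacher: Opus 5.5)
Your proof is correct and follows the paper's argument essentially step for step. You use the walk expansion of $[x^d]q_{A,v}$ with squarefree extraction giving $\Gamma_v$, the principal-minor expansion of $\det(I+tEZ_W)$, the product rule (\ref{eq7}) combined with the divided-difference sign $(-1)^{j-|I|}$, and the same cycle-merging sign count for (\ref{eq12}). The only differences are cosmetic: you extract $[u^pt^j]$ first and use homogeneity to force $|I|=i$, whereas the paper extracts $z_S$ first. Of your two justifications for the minor expansion, keep the direct column-scaling one and drop the $Z_W^{1/2}$ remark, which is unnecessary in the formal setting.
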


\begin{proof}
\ref{lem7:ita} By \eqref{eq10}, for $d\geq1$, we obtain
\[
 \begin{aligned}
 {}[x^d]q_{A,v}(x)
 &=A_{vW}Z_W(EZ_W)^{d-1}A_{Wv}\\
 &=\sum_{i_1,\ldots,i_d\in W}
 a_{vi_1}a_{i_1i_2}\cdots a_{i_{d-1}i_d}a_{i_dv}
 z_{i_1}\cdots z_{i_d}.
 \end{aligned}
\]
If $|T|=d$, then when extracting the coefficient of the squarefree monomial $z_T$, the vertices in the sequence $i_1,\ldots,i_d$ must be pairwise distinct and must constitute exactly $T$. Hence these sequences are precisely the elements of $\mathcal{O}(T)$, and the corresponding closed walks are all cycles containing $v$. Thus $[x^{|T|}z_T]q_{A,v}(x)=\Gamma_v(T)$, which also holds for $T=\varnothing$. By the principal minor expansion of the determinant,
\[
 \operatorname{det}(I+tEZ_W)=\sum_{I\subseteq W}t^{|I|}z_I\operatorname{det} A[I].
\]
By \eqref{eq7}, if the determinant part uses the vertex set $I\subseteq S$, then the walk part uses $S\setminus I$. Let $d=|S\setminus I|$. By \eqref{eq8}, the latter contributes
\[
 \Gamma_v(S\setminus I)\sum_{h=0}^{d}(-1)^h u^{d-h}t^h.
\]
After multiplying by $t^{|I|}\operatorname{det} A[I]$, the coefficient of $u^pt^j$ corresponds to $h=j-|I|$. Since $|S|=p+j$ and $p\geq0$, the condition $0\leq h\leq d$ is equivalent to $|I|\leq j$, which yields \eqref{eq11}.
Take $S=W$, $p=a-1$, $j=k$. For each $I$, merge the cycle of length $n-|I|\geq a$ in $\Gamma_v(W\setminus I)$ with the permutation $\pi$ in $\operatorname{det} A[I]$ to obtain a permutation $\sigma$, where $n=a+k$.
Let $d=n-1-|I|$. Then $\operatorname{sgn}(\sigma)=(-1)^d\operatorname{sgn}(\pi)$, and so
\[
 (-1)^{a-1}\operatorname{sgn}(\sigma)
 =(-1)^{a-1+d}\operatorname{sgn}(\pi)
 =(-1)^{k-|I|}\operatorname{sgn}(\pi),
\]
where $a-1+d$ and $k-|I|$ have the same parity. Every permutation satisfying $\ell_v(\sigma)\geq a$ has a unique decomposition as above, and its sign agrees with \eqref{eq11}; hence we obtain \eqref{eq12}. 

\ref{lem7:itb}  From the above expansion, the total degree in $u,t$ equals the total degree in the vertex variables, so $[u^pt^j]K_{A,v}$ is homogeneous of degree $p+j$ in the vertex variables. Equation \eqref{eq11} involves only matrix entries within $\{v\}\cup S$, so the corresponding coefficients depend only on $A[\{v\}\cup S]$.
\end{proof}

\subsection{Endpoint elimination and transfer coefficients}\label{subsec:elimination}

To derive the recurrence, we compare the generating functions before
and after eliminating a second vertex $w\neq v$. We collect the
notation for this comparison here; the algebraic identities require
only a nonzero pivot, and the endpoint condition will enter when we
prove nonnegativity.

\begin{enumerate}
\item \textbf{The pivot and reduced matrices.}
Let $s=a_{ww}\neq0$, and let $C$ be the Schur complement at $w$,
with entries $c_{ij}=a_{ij}-a_{iw}a_{wj}/s$ for
$i,j\in V\setminus\{w\}$. Set
\[
 R=V\setminus\{v,w\},\qquad
 F=A[R],\qquad D=C[R],\qquad
 N=A[V\setminus\{v\}],\qquad Z=Z_R.
\]

\item \textbf{Four auxiliary walk series.}
Writing $M_F(x)=(I-xFZ)^{-1}$, define
\[
 \begin{aligned}
 \alpha(x)&=a_{vv}+xA_{vR}ZM_F(x)A_{Rv},\\
 \beta(x)&=a_{vw}+xA_{vR}ZM_F(x)A_{Rw},\\
 \gamma(x)&=a_{wv}+xA_{wR}ZM_F(x)A_{Rv},\\
 \delta(x)&=s+xA_{wR}ZM_F(x)A_{Rw}.
 \end{aligned}
\]
These series record walks between $v$ and $w$ with internal vertices
in $R$.

\item \textbf{The transfer series.}
Since $\delta(0)=s\neq0$, the series $\delta$ is invertible, and we set
\[
 \Omega_{A;v,w}(u)=\frac{\beta(u)\gamma(u)}{\delta(u)}.
\]
This series will supply the coefficients of the term that transfers
the marked vertex from $v$ to $w$.
\end{enumerate}

With this notation, the block identities below describe how the
determinant and closed-walk series change when the vertex $w$ is
eliminated.

\begin{lemma}\label{lem8}
Assume $v,w\in V$ with $v\neq w$, $s=a_{ww}\neq0$,
and retain the notation $C,R,F,D,N,\allowbreak\alpha,\allowbreak\beta,\allowbreak\gamma,\allowbreak\delta$ above. Let $\mathcal{D}_F(t)=\operatorname{det}(I+tFZ)$, $\mathcal{D}_D(t)=\operatorname{det}(I+tDZ)$. Then
\begin{equation}\label{eq13}
 \begin{aligned}
 \operatorname{det}(I+tNZ_{V\setminus\{v\}})
   &=\mathcal{D}_F(t)(1+tz_w\delta(-t)),\\
 q_{A,v}(x)&=\alpha(x)+
       \frac{xz_w\beta(x)\gamma(x)}{1-xz_w\delta(x)},\\
 \mathcal{D}_D(t)&=\mathcal{D}_F(t)\frac{\delta(-t)}s,\qquad
 q_{N,w}(x)=\delta(x),\\
 q_{C,v}(x)&=\alpha(x)-\frac{\beta(x)\gamma(x)}{\delta(x)}.
 \end{aligned}
\end{equation}
\end{lemma}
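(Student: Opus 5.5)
Each of the five identities in \eqref{eq13} is a direct application of Lemma~\ref{lem6}, with $w$ split off from $W=V\setminus\{v\}=R\cup\{w\}$. Throughout I order the index set of $N$ as $(w,R)$. A simultaneous permutation of rows and columns changes neither determinants nor the diagonal structure of $Z$. With this ordering
\[
 I+tNZ_{V\setminus\{v\}}=\begin{pmatrix}1+ts z_w& tA_{wR}Z\\ tA_{Rw}z_w& I+tFZ\end{pmatrix}.
\]
The block formula \eqref{eq9} then gives $\mathcal{D}_F(t)\bigl(1+tsz_w-t^2z_wA_{wR}Z(I+tFZ)^{-1}A_{Rw}\bigr)$. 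Since $(I+tFZ)^{-1}=M_F(-t)$, the bracket is exactly $1+tz_w\delta(-t)$, which proves the first identity. The same computation with $t$ replaced by $-x$ and the second column block multiplied by $Z$ shows that $\delta$ is the walk series of $N$ at $w$, that is, $q_{N,w}=\delta$. Indeed, the definition \eqref{eq10} applied to $N$ with marked vertex $w$ has $W$-part $R$ and $E=F$, so this identity is just the definitions read off.

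For $q_{A,v}$, I would expand $(I-xNZ_{V\setminus\{v\}})^{-1}$ blockwise by the Schur complement inversion formula in the same $(w,R)$ ordering. The $(w,w)$ entry of the inverse is $(1-xz_w\delta(x))^{-1}$; the same computation as above gives the pivot $1-xz_w s-x^2z_wA_{wR}ZM_FA_{Rw}=1-xz_w\delta(x)$. The other blocks are expressed through $M_F$ and that pivot. Substituting into $a_{vv}+xA_{vW}Z_W(\cdots)A_{Wv}$ and collecting terms, the $R$--$R$ part gives $\alpha$ plus a correction. The terms through $w$ combine as
\[
 xz_w\bigl(a_{vw}+xA_{vR}ZM_FA_{Rw}\bigr)\bigl(a_{wv}+xA_{wR}ZM_FA_{Rv}\bigr)/(1-xz_w\delta).
\]
Combinatorially, this says that a closed walk from $v$ decomposes by its visits to $w$ into excursions counted by $\beta$, then $\delta$ repeated, then $\gamma$. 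I would present this walk-decomposition viewpoint as a check, but write the proof algebraically via the inversion formula.

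For $D=C[R]=F-\frac1s A_{Rw}A_{wR}$, I would use the rank-one update in \eqref{eq9} with $T=I+tFZ$, $\xi=-\frac ts A_{Rw}$ and $\zeta^{\mathsf T}=A_{wR}Z$. This gives $\mathcal{D}_D(t)=\mathcal{D}_F(t)\bigl(1-\frac ts A_{wR}ZM_F(-t)A_{Rw}\bigr)=\mathcal{D}_F(t)\delta(-t)/s$.

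For $q_{C,v}$, note that $C$ indexed by $V\setminus\{w\}$ has $W$-part $R$ and principal block $D$. So $q_{C,v}(x)=c_{vv}+xC_{vR}Z(I-xDZ)^{-1}C_{Rv}$, with $c_{vv}=a_{vv}-a_{vw}a_{wv}/s$, $C_{vR}=A_{vR}-a_{vw}A_{wR}/s$ and $C_{Rv}=A_{Rv}-A_{Rw}a_{wv}/s$. I would invert $I-xDZ=(I-xFZ)+\frac xs A_{Rw}A_{wR}Z$ by Sherman--Morrison, the inverse form of the rank-one identity in Lemma~\ref{lem6}:
\[
 (I-xDZ)^{-1}=M_F-\frac{\frac xs M_FA_{Rw}A_{wR}ZM_F}{1+\frac xs A_{wR}ZM_FA_{Rw}}.
\]
The denominator equals $\delta(x)/s$. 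Expanding the product then yields nine terms. The main obstacle is the bookkeeping needed to check that they collapse to $\alpha-\beta\gamma/\delta$. I would organize this by writing $P=xA_{vR}ZM_FA_{Rw}=\beta-a_{vw}$, $Q=xA_{wR}ZM_FA_{Rv}=\gamma-a_{wv}$ and $\Delta=\delta-s$. Then everything becomes a rational expression in $a_{vw},a_{wv},s,P,Q,\Delta$ and $\alpha$, namely
\[
 \alpha-\frac{a_{vw}a_{wv}}s-\frac{a_{vw}Q+Pa_{wv}}s+\frac{a_{vw}\Delta a_{wv}}{s^2}-\frac{(P-a_{vw}\Delta/s)(Q-\Delta a_{wv}/s)}{s+\Delta}\cdot\frac{s}{s},
\]
up to the exact form. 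Clearing the denominator $s^2\delta$ gives a polynomial identity to verify. All inverses involved exist because their constant terms are $I$ or $s\neq0$, so the formal series ring setting of Lemma~\ref{lem6} applies.
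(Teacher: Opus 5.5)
Your proof is correct. For the first, third and fourth identities it is the paper's argument: the block determinant in the order $(w,R)$, the rank-one update with $\xi=-\frac ts A_{Rw}$ and $\zeta^{\mathsf T}=A_{wR}Z$, and reading off the definition of $q_{N,w}$. For the fourth identity, drop your sentence about replacing $t$ by $-x$; the definitional reading is the whole proof. For $q_{A,v}$, your block-inverse computation is the same block elimination the paper performs by solving $(I-xNZ_{V\setminus\{v\}})y=A_{V\setminus\{v\},v}$ for $y_w$ and $y_R$.

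The genuine difference is the last identity. The paper avoids bookkeeping with a single auxiliary system, $sb+A_{wR}Zy_R=a_{wv}$ and $-xA_{Rw}b+(I-xFZ)y_R=-xA_{Rv}$. It evaluates the scalar $a_{vv}-a_{vw}b-A_{vR}Zy_R$ in two elimination orders. Eliminating $y_R$ first gives $\alpha-\beta\gamma/\delta$. Eliminating $b$ first produces $I-xDZ$ and gives $q_{C,v}$. Uniqueness of the formal solution then identifies the two, which is a quotient-formula argument.

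Your Sherman--Morrison route is more computational but equally rigorous. The inverse formula is valid here because its denominator $\delta/s$ has constant term $1$. The bookkeeping you flag as the main obstacle is actually short. Your displayed expression is exact: drop the factor $s/s$ and the hedge ``up to the exact form.'' After multiplying by $\delta=s+\Delta$, the terms in $\Delta/s$ and $\Delta^2/s^2$ cancel in pairs, leaving $-(a_{vw}+P)(a_{wv}+Q)=-\beta\gamma$. So only $\delta$, not $s^2\delta$, needs clearing.

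Your route also has a side benefit. The same Sherman--Morrison formula immediately gives $(I-xDZ)^{-1}A_{Rw}=\frac{s}{\delta(x)}M_F(x)A_{Rw}$. The paper has to derive this identity separately at the start of the proof of Lemma~\ref{lem10}.
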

\begin{proof}
Simultaneously reorder the row and column indices of \(N\) into the order \(w,R\), and perform the same reordering on the corresponding diagonal variable matrix. Under this arrangement, we obtain
\[
 I+tNZ_{V\setminus\{v\}}
 =\begin{pmatrix}
 1+tsz_w&tA_{wR}Z\\
 tA_{Rw}z_w&I+tFZ
 \end{pmatrix}.
\]
The above simultaneous reordering is equivalent to a permutation similarity transformation, so it does not change the determinant. The following block computations are all carried out in this order, and do not concern the total nonnegativity of the reordered matrix. By \eqref{eq9}, the Schur complement of the lower-right block is
\[
 1+tsz_w-t^2z_wA_{wR}Z(I+tFZ)^{-1}A_{Rw}
 =1+tz_w\delta(-t).
\]
This gives the first identity.

Let \(y=(y_w,y_R^{\mathsf T})^{\mathsf T}\) satisfy
\[
 (I-xNZ_{V\setminus\{v\}})y=A_{V\setminus\{v\},v}.
\]
The constant term of the coefficient matrix is the identity matrix, so \(y\) exists uniquely. Partitioning the equations gives
\begin{align}
 (1-xs z_w)y_w-xA_{wR}Zy_R&=a_{wv}, \label{eq13.1}\\
 -xz_wA_{Rw}y_w+(I-xFZ)y_R&=A_{Rv}. \label{eq13.2}
\end{align}
From \eqref{eq13.2}, we get
\[
 y_R=M_F(x)A_{Rv}+xz_wM_F(x)A_{Rw}y_w.
\]
Substituting into \eqref{eq13.1}, we obtain
\[
 \bigl(1-xs z_w-x^2z_wA_{wR}ZM_F(x)A_{Rw}\bigr)y_w
 =a_{wv}+xA_{wR}ZM_F(x)A_{Rv}.
\]
By the definitions of \(\delta\) and \(\gamma\),
\[
 (1-xz_w\delta(x))y_w=\gamma(x),\qquad
 y_w=\frac{\gamma(x)}{1-xz_w\delta(x)}.
\]
Substituting \(y_R\) and \(y_w\) into the definition of \(q_{A,v}\), we obtain
\[
 q_{A,v}(x)=\alpha(x)+xz_w\beta(x)y_w
 =\alpha(x)+\frac{xz_w\beta(x)\gamma(x)}{1-xz_w\delta(x)}.
\]
Since \(D=F-A_{Rw}A_{wR}/s\), we have
\[
 I+tDZ=(I+tFZ)-\frac ts A_{Rw}A_{wR}Z.
\]
By the rank-one determinant identity in \eqref{eq9}, we obtain
\[
 \mathcal{D}_D(t)=\mathcal{D}_F(t)
 \left(1-\frac ts A_{wR}Z(I+tFZ)^{-1}A_{Rw}\right)
 =\mathcal{D}_F(t)\frac{\delta(-t)}s.
\]
Also \(N[R]=F\), so from \eqref{eq10} we have \(q_{N,w}(x)=\delta(x)\).

Consider the system of equations for the scalar \(b\) and the column vector \(y_R\):
\begin{align}
 s b+A_{wR}Zy_R&=a_{wv}, \label{eq9.1}\\
 -xA_{Rw}b+(I-xFZ)y_R&=-xA_{Rv}. \label{eq9.2}
\end{align}
Its constant term matrix with respect to all variables is \(\operatorname{diag}(s,I)\), so it has a unique formal power series solution. Consider the scalar
\[
 a_{vv}-a_{vw}b-A_{vR}Zy_R.
\]
Eliminating \(y_R\) using \eqref{eq9.2}, we get
\[
 y_R=-xM_F(x)A_{Rv}+xM_F(x)A_{Rw}b.
\]
Substituting into \eqref{eq9.1}, we have \(\delta(x)b=\gamma(x)\), so the above scalar equals
\[
 \alpha(x)-\frac{\beta(x)\gamma(x)}{\delta(x)}.
\]
On the other hand, eliminating \(b\) from \eqref{eq9.1}, we get
\[
 b=\frac{a_{wv}-A_{wR}Zy_R}{s}.
\]
Substituting into \eqref{eq9.2} and using the definition of the Schur complement, we obtain
\[
 (I-xDZ)y_R=-xC_{Rv},\qquad
 y_R=-x(I-xDZ)^{-1}C_{Rv}.
\]
Then the above scalar is
\[
 c_{vv}-C_{vR}Zy_R
 =c_{vv}+xC_{vR}Z(I-xDZ)^{-1}C_{Rv}
 =q_{C,v}(x).
\]
By uniqueness of the solution, the two expressions are equal.
\end{proof}

\begingroup
\emergencystretch=1em
Extracting the coefficient of $z_w$ from the identities in
Lemma~\ref{lem8} separates the Schur-complement contribution from the
term that transfers the marked vertex from $v$ to $w$.\par
\endgroup

\begin{lemma}\label{lem9}
Retaining the assumptions and notation of Lemma \ref{lem8}, we have
\begin{equation}\label{eq14}
 [z_w]_{z_w}K_{A,v}(u,t)
 =tsK_{C,v}(u,t)+u\Omega_{A;v,w}(u)K_{N,w}(u,t).
\end{equation}
Both sides retain $u,t$ and the vertex variables on $R$.
\end{lemma}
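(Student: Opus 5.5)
The plan is to compute $[z_w]_{z_w}K_{A,v}$ directly from the factorizations in Lemma~\ref{lem8}, and then to reduce the claimed identity to a scalar identity between univariate series. For brevity I write $\mathcal{L}[q]=\frac{uq(u)+tq(-t)}{u+t}$ in the sense of \eqref{eq8}. This operator is linear in $q$, and the identity $(u+t)\mathcal{L}[q]=uq(u)+tq(-t)$ holds in the ring of formal power series in $u,t$ and the vertex variables.

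\textbf{Step 1: first-order expansion in $z_w$.} The series $\alpha,\beta,\gamma,\delta$ and $\mathcal{D}_F$ involve only the variables indexed by $R$. Hence Lemma~\ref{lem8} gives
\[
\operatorname{det}(I+tNZ_{V\setminus\{v\}})=\mathcal{D}_F(t)+z_w\,t\,\mathcal{D}_F(t)\delta(-t)
\]
exactly. Expanding the geometric factor in the second identity of \eqref{eq13} gives $q_{A,v}(x)=\alpha(x)+z_w\,x\beta(x)\gamma(x)+O(z_w^2)$. Since $\mathcal{L}$ is linear and acts coefficientwise in $z_w$, the product rule yields
\[
[z_w]_{z_w}K_{A,v}=t\,\mathcal{D}_F(t)\delta(-t)\,\mathcal{L}[\alpha]+\mathcal{D}_F(t)\,\mathcal{L}[x\beta\gamma].
\]

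\textbf{Step 2: insert the Schur complement.} By Lemma~\ref{lem8}, $ts\,\mathcal{D}_D(t)=t\,\mathcal{D}_F(t)\delta(-t)$ and $q_{C,v}=\alpha-\Omega$ with $\Omega=\beta\gamma/\delta$. Therefore
\[
ts\,K_{C,v}=t\,\mathcal{D}_F(t)\delta(-t)\bigl(\mathcal{L}[\alpha]-\mathcal{L}[\Omega]\bigr).
\]
Since $q_{N,w}=\delta$ and $N$ with the vertex $w$ removed is $F$, we also have $K_{N,w}=\mathcal{D}_F(t)\,\mathcal{L}[\delta]$. Substituting these, the identity \eqref{eq14} becomes, after dividing out the common factor $\mathcal{D}_F(t)$, the identity
\[
t\delta(-t)\,\mathcal{L}[\Omega]+\mathcal{L}[x\Omega\delta]=u\Omega(u)\,\mathcal{L}[\delta].
\]
Here I used $x\beta\gamma=x\Omega(x)\delta(x)$.

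\textbf{Step 3: verify the scalar identity.} Multiply both sides by $u+t$. The left side becomes
\[
t\delta(-t)\bigl(u\Omega(u)+t\Omega(-t)\bigr)+u^2\Omega(u)\delta(u)-t^2\Omega(-t)\delta(-t)=tu\,\delta(-t)\Omega(u)+u^2\Omega(u)\delta(u).
\]
The right side becomes $u\Omega(u)\bigl(u\delta(u)+t\delta(-t)\bigr)$, which is the same expression.

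The only point needing care, and the main potential obstacle, is justifying the cancellation of $u+t$. Definition~\eqref{eq8} deliberately avoids inverting $u+t$. However, $u+t$ is a nonzero divisor in the power series ring over $\mathbb{R}$, since that ring is an integral domain. So equality after multiplication by $u+t$ implies equality itself. I would also note that all the series involved are well defined formal series. This uses $\delta(0)=s\neq0$ for $\Omega$, and the fact that $\mathcal{L}$ applied to a product like $x\Omega\delta$ is computed from the coefficients of that product. This completes the plan.
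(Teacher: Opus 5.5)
Your proposal is correct and follows essentially the same route as the paper. In both, you expand $K_{A,v}$ to first order in $z_w$ via Lemma~\ref{lem8}, substitute $\mathcal{D}_D=\mathcal{D}_F\delta(-t)/s$, $q_{C,v}=\alpha-\beta\gamma/\delta$ and $q_{N,w}=\delta$, multiply through by $u+t$, and cancel $u+t$ in the integral domain $\mathbb{R}[[u,t,z_i]]$. The only cosmetic difference is that you cancel the common $\mathcal{L}[\alpha]$-term and the factor $\mathcal{D}_F(t)$ before clearing the denominator, whereas the paper multiplies by $u+t$ first and then subtracts.
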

\begin{proof}
Write $\alpha_u=\alpha(u)$, $\alpha_-=\alpha(-t)$, and adopt the same convention for $\beta,\gamma,\delta$.
By \eqref{eq13}, the constant term of $q_{A,v}(x)$ with respect to \(z_w\) is $\alpha(x)$, and the coefficient of the linear term is $x\beta(x)\gamma(x)$. Therefore
\[
 [z_w]_{z_w}q_{A,v}(u)=u\beta_u\gamma_u,\qquad
 [z_w]_{z_w}q_{A,v}(-t)=-t\beta_-\gamma_-.
\]
Since the determinant factor is $\mathcal{D}_F(t)(1+tz_w\delta_-)$, we have
\[
 \begin{aligned}
 (u+t)[z_w]_{z_w}K_{A,v}
 =\mathcal{D}_F(t)\{t\delta_-(u\alpha_u+t\alpha_-)
       +u^2\beta_u\gamma_u-t^2\beta_-\gamma_-\}.
 \end{aligned}
\]
Then using $\mathcal{D}_D(t)=\mathcal{D}_F(t)\delta_-/s$ and
$q_{C,v}=\alpha-\beta\gamma/\delta$, we obtain
\[
 \begin{aligned}
 (u+t)tsK_{C,v}
 =\mathcal{D}_F(t)\biggl\{&t\delta_-(u\alpha_u+t\alpha_-)
 -tu\delta_-\frac{\beta_u\gamma_u}{\delta_u}
 -t^2\beta_-\gamma_-\biggr\}.
 \end{aligned}
\]
Subtracting the two identities, we obtain
\[
 \begin{aligned}
 &(u+t)\bigl([z_w]_{z_w}K_{A,v}-tsK_{C,v}\bigr)\\
 &\quad=\mathcal{D}_F(t)\left(u^2\beta_u\gamma_u
       +tu\delta_-\frac{\beta_u\gamma_u}{\delta_u}\right)\\
 &\quad=u\frac{\beta_u\gamma_u}{\delta_u}\,
       \mathcal{D}_F(t)(u\delta_u+t\delta_-).
 \end{aligned}
\]
Since \(q_{N,w}=\delta\), the right-hand side equals $(u+t)u\Omega_{A;v,w}K_{N,w}$. The above identity holds in the ring $\mathbb{R}[[u,t,z_i\ (i\in R)]]$ of formal power series with real coefficients. Since this ring has no zero divisors, we may cancel the nonzero factor \(u+t\), yielding \eqref{eq14}.
\end{proof}

To use the recurrence in Lemma~\ref{lem9} for a positivity argument,
we must control the coefficients of the transfer series; the following
factorization makes their nonnegativity explicit under the endpoint
and $\operatorname{TP}_2$ assumptions.

\begin{lemma}\label{lem10}
Let $A\in\operatorname{TP}_2(V)$, let \(w\) be an endpoint, $v\neq w$, $s=a_{ww}>0$.
Then all coefficients of $\Omega_{A;v,w}(u)$ with respect to \(u\) and \(z_i\) $(i\in R)$ are nonnegative.
Specifically,
\begin{equation}\label{eq15}
 \begin{aligned}
 \Omega_{A;v,w}(u)
 ={}&\left(\frac{a_{vw}}s+
       \frac us C_{vR}Z(I-uDZ)^{-1}A_{Rw}\right)\\
 &\quad\cdot\left(a_{wv}+uA_{wR}Z(I-uFZ)^{-1}A_{Rv}\right).
 \end{aligned}
\end{equation}
Moreover, the $u^d$ coefficient of $\Omega_{A;v,w}$ is homogeneous of degree $d$ in the vertex variables.
\end{lemma}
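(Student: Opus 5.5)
The plan is to prove the factorization \eqref{eq15} by a direct algebraic manipulation. Nonnegativity and homogeneity will then be read off from the factors, using Lemma~\ref{lem5} for the signs of the Schur-complement entries. Since $\Omega_{A;v,w}=\beta\gamma/\delta$ and the second factor in \eqref{eq15} is exactly $\gamma(u)$, it suffices to prove
\[
 \frac{\beta(u)}{\delta(u)}=\frac{a_{vw}}s+\frac us\,C_{vR}Z(I-uDZ)^{-1}A_{Rw}.
\]

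First I will compute $(I-uDZ)^{-1}A_{Rw}$ by a rank-one (Sherman--Morrison type) update, in the same formal-series spirit as the second identity of Lemma~\ref{lem6}. Since $D=F-A_{Rw}A_{wR}/s$, we have
\[
 I-uDZ=(I-uFZ)+\frac us A_{Rw}A_{wR}Z .
\]
Multiplying by $M_F(u)$ and solving the rank-one system gives
\[
 (I-uDZ)^{-1}A_{Rw}=\frac{M_F(u)A_{Rw}}{1+\frac us A_{wR}ZM_F(u)A_{Rw}}=\frac{s\,M_F(u)A_{Rw}}{\delta(u)},
\]
where the last step uses the definition of $\delta$. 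This can be verified directly by applying $I-uDZ$ to the right-hand side. Alternatively it follows by eliminating in a linear system, exactly as in the proof of Lemma~\ref{lem8}.

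Next I will substitute $C_{vR}=A_{vR}-a_{vw}A_{wR}/s$ into the right-hand side and use the definitions of $\beta$ and $\delta$:
\[
 uA_{vR}ZM_F(u)A_{Rw}=\beta(u)-a_{vw},\qquad uA_{wR}ZM_F(u)A_{Rw}=\delta(u)-s.
\]
The right-hand side then becomes
\[
 \frac{a_{vw}}s+\frac{(\beta-a_{vw})-\frac{a_{vw}}s(\delta-s)}{\delta}=\frac{a_{vw}}s+\frac{\beta-a_{vw}\delta/s}{\delta}=\frac{\beta}{\delta},
\]
which proves \eqref{eq15}.

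For nonnegativity, $A\in\operatorname{TP}_2(V)$ has nonnegative entries. Since $w$ is an endpoint and $s>0$, Lemma~\ref{lem5}\ref{lem5:itb} gives $C\in\operatorname{TP}_1(V\setminus\{w\})$. Hence $C_{vR}$ and $D=C[R]$ are entrywise nonnegative, and $s>0$. Each factor in \eqref{eq15} therefore expands, via $(I-uDZ)^{-1}=\sum_d u^d(DZ)^d$ and likewise for $F$, as a series whose coefficients are sums of products of nonnegative numbers and monomials in the $z_i$. The same is then true of their product.

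Homogeneity follows because in each expansion every power of $u$ comes paired with exactly one factor $Z$. Concretely, the term $u^{d}$ in either factor arises from $u\,(\cdots)Z\bigl(uXZ\bigr)^{d-1}(\cdots)$ with $X\in\{D,F\}$, which has vertex degree $d$. Degrees add under multiplication, so the $u^d$ coefficient of $\Omega_{A;v,w}$ is homogeneous of degree $d$.

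The only delicate point is the sign input. This is where the endpoint hypothesis on $w$ is needed: it guarantees, through Lemma~\ref{lem5}, that the entries $c_{ij}=a_{ij}-a_{iw}a_{wj}/s$ are nonnegative, whereas for a non-endpoint $w$ they could be negative. The algebraic identity itself is routine once the rank-one update is in hand.
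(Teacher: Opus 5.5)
Your proposal is correct and follows essentially the same route as the paper. The paper also verifies $(I-uDZ)M_F(u)A_{Rw}=\frac{\delta(u)}{s}A_{Rw}$ to obtain $(I-uDZ)^{-1}A_{Rw}=\frac{s}{\delta(u)}M_F(u)A_{Rw}$, then substitutes $C_{vR}=A_{vR}-a_{vw}A_{wR}/s$ to arrive at $\beta/\delta$. It reads off nonnegativity from $C\in\operatorname{TP}_1$ via Lemma~\ref{lem5} and homogeneity from the geometric-series expansions, just as you do.
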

\begin{proof}
Write $M_D(u)=(I-uDZ)^{-1}$. Since $D=F-A_{Rw}A_{wR}/s$, we have
\[
 \begin{aligned}
 (I-uDZ)M_F(u)A_{Rw}
 &=A_{Rw}+\frac us A_{Rw}A_{wR}ZM_F(u)A_{Rw}\\
 &=\frac{\delta(u)}s A_{Rw}.
 \end{aligned}
\]
Hence
\[
 M_D(u)A_{Rw}=\frac{s}{\delta(u)}M_F(u)A_{Rw}.
\]
Combining with $C_{vR}=A_{vR}-a_{vw}A_{wR}/s$, we obtain
\[
 \begin{aligned}
 \frac{a_{vw}}s+\frac us C_{vR}ZM_D(u)A_{Rw}
 &=\frac{a_{vw}}s+
   \frac u{\delta(u)}
    \left(A_{vR}-\frac{a_{vw}}s A_{wR}\right)ZM_F(u)A_{Rw}\\
 &=\frac{a_{vw}}s+
   \frac{uA_{vR}ZM_F(u)A_{Rw}}{\delta(u)}
   -\frac{a_{vw}}s\frac{\delta(u)-s}{\delta(u)}\\
 &=\frac{a_{vw}+uA_{vR}ZM_F(u)A_{Rw}}{\delta(u)}
 =\frac{\beta(u)}{\delta(u)}.
 \end{aligned}
\]
Multiplying by \(\gamma(u)\) gives \eqref{eq15}. The above computation only requires \(s\neq0\). By formal geometric series expansion,
\[
 (I-uDZ)^{-1}=\sum_{h\geq0}u^h(DZ)^h,\qquad
 (I-uFZ)^{-1}=\sum_{h\geq0}u^h(FZ)^h.
\]
In each term of the two factors in \eqref{eq15}, the degree in \(u\) equals the total degree in the vertex variables. This property is preserved under multiplication. Therefore $[u^d]\Omega_{A;v,w}(u)$ is homogeneous of degree \(d\) in the vertex variables, and this property does not depend on the nonnegativity of the matrix entries. From $A\in\operatorname{TP}_2(V)$, \(w\) an endpoint, and Lemma \ref{lem5}, we get $C\in\operatorname{TP}_1$. Hence $C_{vR}$, \(D\), \(F\), and the various row and column blocks in \eqref{eq15} are entrywise nonnegative. Combining $s>0$ with the above formal geometric series expansion, the coefficients of both factors are nonnegative; hence all coefficients of $\Omega_{A;v,w}$ are nonnegative.
\end{proof}

\subsection{Proof of the main theorem}\label{subsec:hook-proof}
We now extract the marked cycle recurrence from Lemma~\ref{lem9} and
combine Lemmas~\ref{lem5} and~\ref{lem10} to prove the nonnegativity of
all marked cycle sums by strong induction.

\begin{proof}[Proof of Theorem~\ref{thm1.1}]
Let $V$ be a finite totally ordered set, $|V|=a+k$, where $a\geq1$, $k\geq0$, and let $A$ be a real matrix indexed by $V$.
For fixed $\sigma\in\mathfrak{S}(V)$, each cycle of length $q\geq a$ contributes \(q\) marked vertices \(v\) satisfying $\ell_v(\sigma)\geq a$; therefore
\[
 \sum_{v\in V}\mathbf{1}_{\{\ell_v(\sigma)\geq a\}}
 =\sum_{q\geq a}q\,m_q(\sigma),
\]
where $\mathbf{1}$ is the indicator function. Substituting this into \eqref{eq3}, multiplying by $\operatorname{wt}_A(\sigma)$, summing over all permutations, and then interchanging the finite sums, we obtain from Definition \eqref{eq4}
\begin{equation}\label{eq5}
 \kappa_{a,k}\phi^{(a,1^k)}[A]
 =\sum_{v\in V}\Psi_v^{a,k}(A).
\end{equation}
Since $\kappa_{a,k}>0$, it suffices to prove that when $A\in\operatorname{TP}_{k+1}(V)$, we have $\Psi_v^{a,k}(A)\geq0$ for every $v\in V$. Let $k\geq1$, take distinct $v,w\in V$, and assume $s=a_{ww}\neq0$. Let \(C\) be the Schur complement with respect to the pivot \(s\), $R=V\setminus\{v,w\}$, $N=A[V\setminus\{v\}]$, and for $T\subseteq R$ write
\[
 A_T=A[(R\setminus T)\cup\{w\}],\qquad
 \eta_T=[u^{|T|}z_T]\Omega_{A;v,w}(u).
\]
Extract the coefficient of $u^{a-1}t^kz_R$ from identity \eqref{eq14}. Since $R\cup\{w\}=V\setminus\{v\}$ and by \eqref{eq12}, the left-hand side is
\[
 [u^{a-1}t^kz_R]\bigl([z_w]_{z_w}K_{A,v}\bigr)
 =[u^{a-1}t^kz_{V\setminus\{v\}}]K_{A,v}
 =\Psi_v^{a,k}(A).
\]
Since the index set of \(C\) is $V\setminus\{w\}=\{v\}\cup R$, its order is $a+(k-1)$, and $k-1\geq0$, by \eqref{eq12} the first term on the right-hand side gives
\[
 [u^{a-1}t^kz_R](tsK_{C,v})
 =s[u^{a-1}t^{k-1}z_R]K_{C,v}
 =s\Psi_v^{a,k-1}(C).
\]
For the second term $u\Omega_{A;v,w}K_{N,w}$, if we select the squarefree term in $\Omega_{A;v,w}$ corresponding to the vertex set $T\subseteq R$, then by the homogeneity obtained in the proof of Lemma \ref{lem10} under the condition $s\neq0$, this term is $\eta_Tu^{|T|}z_T$. By \eqref{eq7}, \(K_{N,w}\) uses $R\setminus T$, and the required degree in \(u\) is
\[
 a-1-1-|T|=a-2-|T|.
\]
Therefore only terms with $|T|\leq a-2$ contribute. Set $a'=a-1-|T|\geq1$. By the locality in Lemma \ref{lem7}, the corresponding coefficient can be extracted on \(A_T\). Since
\[
 |(R\setminus T)\cup\{w\}|=a+k-1-|T|=a'+k,
\]
we have
\[
 [u^{a-2-|T|}t^kz_{R\setminus T}]K_{N,w}
 =\Psi_w^{a',k}(A_T).
\]
Summing over all \(T\) satisfying the condition, we obtain
\begin{equation}\label{eq17}
 \Psi_v^{a,k}(A)=s\Psi_v^{a,k-1}(C)
 +\sum_{\substack{T\subseteq R\\|T|\leq a-2}}
       \eta_T\Psi_w^{a-1-|T|,k}(A_T).
\end{equation}
When $a=1$, the sum on the right-hand side is empty and its value is taken to be \(0\). If moreover $A\in\operatorname{TP}_{k+1}(V)$, \(w\) is an endpoint, and $s>0$, then from $k\geq1$, the preservation of $\operatorname{TP}_{k+1}$ by principal submatrices, and Lemmas \ref{lem5} and \ref{lem10}, we obtain
\[
 C\in\operatorname{TP}_k(V\setminus\{w\}),\qquad
 A_T\in\operatorname{TP}_{k+1}((R\setminus T)\cup\{w\}),\qquad
 \eta_T\geq0.
\]
We now proceed by strong induction on $n=|V|$, proving simultaneously the assertion $\Psi_v^{a,k}(A)\geq0$ for all integer parameter pairs $(a,k)$ with $a\geq1$, $k\geq0$, $a+k=n$, all $n\times n$ matrices $A\in\operatorname{TP}_{k+1}(V)$, and all marked vertices $v\in V$. The index sets of all submatrices and Schur complements inherit the natural order from \(V\). When $k=0$, $a=n$, the permutations in question are all \(n\)-cycles, and their signs cancel with $(-1)^{a-1}$ in the definition. Since $A\in\operatorname{TP}_1$, we have
\[
 \Psi_v^{n,0}(A)=
 \sum_{\substack{\sigma\in\mathfrak{S}(V)\\\sigma\text{ contains exactly one }n\text{-cycle}}}
 \operatorname{wt}_A(\sigma)\geq0.
\]
In particular, when $n=1$, $\Psi_v^{1,0}(A)=a_{vv}\geq0$. Let $n\geq2$, and assume the assertion holds for all matrices of order less than \(n\), the corresponding parameter pairs, and marked vertices.
From the above discussion, it remains to consider $k\geq1$. Fix $a+k=n$ and $v\in V$, and among the two endpoints of \(V\) choose an endpoint \(w\) different from \(v\).
Since $A\in\operatorname{TP}_{k+1}(V)\subseteq\operatorname{TP}_2(V)$, we have $a_{ww}\geq0$.

If $a_{ww}=0$, then by Lemma \ref{lem5}, either the \(w\)-th row or the \(w\)-th column is identically zero. Hence $\operatorname{wt}_A(\sigma)=0$ for all $\sigma\in\mathfrak{S}(V)$, so $\Psi_v^{a,k}(A)=0$.

If $s=a_{ww}>0$, then applying \eqref{eq17}, since $C\in\operatorname{TP}_k(V\setminus\{w\})=\operatorname{TP}_{(k-1)+1}(V\setminus\{w\})$ and the order of \(C\) is $n-1=a+(k-1)$, the induction hypothesis gives $\Psi_v^{a,k-1}(C)\geq0$. For each $T\subseteq R$ with $|T|\leq a-2$, set $a'=a-1-|T|\geq1$. The matrix $A_T\in\operatorname{TP}_{k+1}$ has order $n-1-|T|=a'+k<n$ and contains the marked vertex \(w\). Applying the induction hypothesis again gives $\Psi_w^{a',k}(A_T)\geq0$. Combining $s>0$ and $\eta_T\geq0$, all terms on the right-hand side of \eqref{eq17} are nonnegative, so $\Psi_v^{a,k}(A)\geq0$.
The conclusion still holds when the sum is empty. Therefore, by strong induction, for any $a\geq1$, $k\geq0$, and $A\in\operatorname{TP}_{k+1}(V)$, since $|V|=a+k$, the cycle sum corresponding to every marked vertex $v\in V$ is nonnegative. Take $V=[n]$. From \eqref{eq5} and the nonnegativity of the above marked cycle sums, we obtain
\[
 \kappa_{a,k}\phi^{(a,1^k)}[A]
 =\sum_{v\in[n]}\Psi_v^{a,k}(A)\geq0.
\]
Since $\kappa_{a,k}>0$, we have $\phi^{(a,1^k)}[A]\geq0$.
Since $\ell((a,1^k))=k+1$, when $r\geq k+1$ we have $\operatorname{TP}_r(n)\subseteq\operatorname{TP}_{k+1}(n)$; hence the conclusion holds for any $r\geq\ell(\lambda)$.
\end{proof}

\noindent{\bf Declaration of Competing Interest}\\
{The authors have no conflicts of interest to disclose.}

\noindent{\bf Data Availability}\\
{Data sharing is not applicable as no new data were created or analyzed.}

\noindent{\bf Acknowledgements:} This work is supported by the National Natural Science Foundation of China (Nos. 12261071,  12571019),  and Natural Science Foundation of Qinghai Province (No. 2025-ZJ-902T).


\begin{thebibliography}{abcdsfgh}
\bibitem{cle1}
S. Clearman, B. Shelton  and M. Skandera,
Path tableaux and combinatorial interpretations of immanants for class functions on $S_n$,
\textit{Discrete Math. Theor. Comput. Sci. Proc.} AO (2011), 233--244.

\bibitem{chan}
 O. Chan, T. Lam, Immanant inequalities for Laplacians of trees, \textit{SIAM J. Matrix Anal. Appl.} 21 (1999) 129--144.

\bibitem{don1} 
X. Dong, T. Wu, H. Lai, Some immanantal inequalities and equalities for linear combination matrices of (di)graphs, \textit{Linear Multilinear Algebra}  74 (2026) 1747--1778.

\bibitem{don2} 
X. Dong, T. Wu, Immanantal polynomials of the linear combination matrices of graphs,  http://arxiv.org/abs/2604.04489.

\bibitem{gou1}
I. Goulden and  D. Jackson, Immanants of combinatorial matrices, \textit{J. Algebra} {\bf 148} (1992), 305--324.

\bibitem{gou2}
I. Goulden and  D. Jackson,   Immanants, Schur functions, and the MacMahon master theorem, \textit{Proc. Am. Math. Soc.} {\bf 115} (1992), 605--612.

\bibitem{hai}
M. Haiman, Hecke algebra characters and immanant conjectures, \textit{J. Amer. Math. Soc.} {\bf 6} (1993), 569--595.

\bibitem{hor1}
R. A. Horn  and  F. Zhang, Basic properties of the Schur complement,
in: F. Zhang (Ed.), \textit{The Schur Complement and Its Applications},
Numerical Methods and Algorithms, vol. 4, Springer, 2005, pp. 17--46.

\bibitem{les1}
N. R. T. Lesnevich, Hook-shape immanant characters from Stanley--Stembridge characters,
\textit{Algebr. Comb.} {\bf 7} (2024), 137--157.

\bibitem{lit} 
D. E. Littlewood, The Theory of Group Characters and Matrix Representations of Groups, Oxford Univ. Press, London, 1950.

\bibitem{sch} I. Schur, \"{U}ber endliche Gruppen und Hermitesche Formen, \textit{Math. Z.} {\bf 1} (1918), 184--207.

\bibitem{ska1}
M. Skandera, Hook immanantal inequalities for totally nonnegative matrices,  arXiv:2510.00327, 2025.

\bibitem{ste1}
J. R. Stembridge, Some conjectures for immanants, \textit{Canad. J. Math.} {\bf 44} (1992), 1079--1099.

\bibitem{yu} G. Yu, H. Qu, The coefficients of the immanantal polynomial, \textit{Appl. Math. Comput.} 339 (2018) 38--44.

\end{thebibliography}
\end{document}